\begin{document}
\bibliographystyle{plain}

\renewcommand{\sectionmark}[1]{\markboth{#1}{}}

\def\move-in{\parshape=1.75true in 5true in}

\def\CC{\mathbb C}
\def\ZZ{\mathbb Z}
\def\NN{\mathbb N}
\def\RR{\mathbb R^}
\def\PP{\mathbb P^}
\def\CA{\mathcal A}
\def\G{\mathfrak{S}}

\def\sgn{\mathrm{sgn}}
\def\dim{\mathrm{dim}}
\def\codim{\mathrm{codim}}
\def\rk{\mathrm{rk}}
\def\v{\underline{v}}
\def\X{X^{(n,m)}_{(1,d)}}

\def\Def#1{\noindent {\bf Definition #1:}}
\def\Not{\noindent {\bf Notation: }}
\def\Prop#1{\noindent {\bf Proposition #1:}}
\def\Proof{\noindent {\it Proof: }}
\def\Obs{\noindent{\bf Remark: }}
\def\Ex{\noindent{\bf Example: }}
\newenvironment{dem}{\begin{proof}[Proof]}{\end{proof}}
\newtheorem{theorem}{Theorem}[section]
\newtheorem{lemma}[theorem]{Lemma}
\newtheorem{propos}[theorem]{Proposition}
\newtheorem{corol}[theorem]{Corollary}
\newtheorem{defi}[theorem]{Definition}
\newtheorem{conj}[theorem]{Conjecture}
\newtheorem{rem}[theorem]{Remark}
%%%%%%%%%%%%%%%%%%%%

\title{Higher secant varieties of $\PP n \times\PP m$ embedded in bi-degree $(1,d)$}
\author{Alessandra Bernardi\footnote{CIRM--FBK  c/o Universit\`a degli Studi di Trento, via Sommarive 14,
38050 Povo (Trento), Italy. E-mail address: {\tt{bernardi@fbk.eu}}}, Enrico Carlini\footnote{Dipartimento di Matematica
Politecnico di Torino, Corso Duca Degli Abruzzi 24, 10129 Torino, Italy. E-mail address: {\tt carlini@calvino.polito.it}}, Maria Virginia Catalisano\footnote{DIPTEM - Dipartimento di Ingegneria della Produzione,
Termoenergetica e Modelli Matematici, Piazzale Kennedy, pad. D, 16129 Genoa, Italy.
E-mail address: {\tt catalisano@diptem.unige.it}}}

\date{}
\maketitle

%%%%%%%%%%%%%%%%%%%%

\begin{abstract}
Let $X^{(n,m)}_{(1,d)}$ denote the Segre\/-Veronese embedding of
$\PP n \times \PP m$ via the sections of the sheaf
$\mathcal{O}(1,d)$. We study the dimensions of higher secant
varieties of $X^{(n,m)}_{(1,d)}$ and we prove that there is no
defective $s^{th}$ secant variety, except
 possibly for $n$ values of $s$. Moreover when ${m+d \choose d}$ is multiple of $(m+n+1)$,
 the $s^{th}$ secant variety of  $X^{(n,m)}_{(1,d)}$
  has the expected dimension for every $s$.
\end{abstract}

%%%%%%%%%%%%%%%%%%%%
\section*{Introduction}
%%%%%%%%%%%%%%%%%%%%

The $s^{th}$ higher secant variety of a projective variety $X\subset \PP
N$ is defined to be the Zariski closure of the union of the span
of $s$ points of $X$ (see Definition \ref{secant}), we will denote
it with $\sigma_{s}(X)$.

Secant varieties have been intensively studied (see for example
\cite{AH}, \cite{BCS}, \cite{Gr}, \cite{Li}, \cite{St},
\cite{Za}). One of the first problems of interest is the
computation of their dimensions. In fact, there is an expected
dimension for $\sigma_{s}(X)\subset \PP N$, that is, the minimum
between $N$ and $s(\dim X)+s-1$. There are well known
examples where that dimension is not attained, for instance,  the variety of
secant lines to the Veronese surface in $\mathbb{P} ^5$. A
variety $X$  is said to be $(s-1)$-defective if there exists
an integer $s\in \mathbb{N}$ such that the dimension of
$\sigma_{s}(X)$ is less than the expected value. We would like to
notice that  only for Veronese varieties a complete list of all
defective cases is given. This description is obtained using a
result by J. Alexander and A. Hirschowitz \cite{AH} recently
reproposed with a simpler proof in \cite{BO}.

The interest around these varieties has been recently revived from
many different areas of mathematics and applications when $X$ is a
variety parameterizing certain kind of tensors (for example
Electrical Engineering - Antenna Array Processing \cite{ACCF},
\cite{DM} and Telecommunications \cite{Ch}, \cite{dLC} -
Statistics -cumulant tensors, see \cite{McC} -, Data Analysis -
Independent Component Analysis \cite{Co1}, \cite{JS} -; for other
applications see also \cite{Co2}, \cite{CR}, \cite{dLMV},
\cite{SBG}  \cite{GVL}).

One of the main examples is the one of Segre varieties. Segre
varieties parameterize completely decomposable tensors (i.e.
projective classes of tensors in $\PP {}(V_1\otimes \cdots \otimes
V_t)$ that can be written as $v_1\otimes \cdots \otimes v_t$ ,
with $v_i\in V_i$ and $V_i$ vector spaces for $i=1, \ldots , t$).
The $s^{th}$ higher secant varieties of Segre varieties is
therefore the closure of the sets of tensors that can be written
as a linear combination of $s$ completely decomposable tensors.

Segre-Veronese varieties can be described both as  the embedding
of $\PP {n_{1}} \times \cdots \times  \PP {n_{t}}$ with the
sections of the sheaf ${\cal O}(d_{1}, \ldots ,d_{t})$ into $\PP
{N}$, for certain $d_{1}, \ldots , d_{t}\in \NN$, with
$N=\Pi_{i=1}^{t}{n_{i}+d_{i} \choose d}-1$, both as a section of
Segre varieties. Consider the  Segre variety that naturally lives
in $\PP {}(V_1^{\otimes d_1}\otimes \cdots \otimes V_t^{\otimes
d_t})$ with $V_i$ vector spaces of dimensions $n_i+1$ for $i=1,
\ldots , t$, then the Segre-Veronese variety is obtained
intersecting that Segre variety with  the projective subspaces
$\PP {}(S^{d_1}V_1\otimes \cdots \otimes S^{d_t}V_t)$ of
projective classes of partially symmetric tensors
(where $S^{d_i}V_i \subset V_i^{\otimes d_i}$ is the subspace of completely symmetric tensors of $V_i^{\otimes d_i}$).

 These two
different ways of describing Segre-Veronese varieties allow us to
translate problems  about partially symmetric tensors into
problems on forms of multi-degree $(d_{1}, \ldots , d_{t})$ and
viceversa. We will follow the description of Segre-Veronese
variety as the variety parameterizing forms of certain
multi-degree.

In this paper we will describe the $s^{th}$ higher secant varieties of
the embedding of  $\PP n \times \PP m$ into $\PP N$ ($N=(n+1){m+d \choose
d}-1$ ), by the sections of the
sheaf $\mathcal{O}(1,d)$, for almost all $s \in \mathbb N$ (see Theorem \ref{t2}).

The higher secant varieties of the  Segre embedding of $\PP n
\times \PP m$ are well known as they parameterize matrices of
bounded rank (e.g., see \cite{Hr}).

One of the first instance of the study of the two factors
Segre-Veronese varieties is the one of $\PP 1 \times \PP 2$
embedded in bi-degree $(1,3)$ and appears in a paper by London
\cite{London}, for a more recent
approach see \cite{DF} and \cite{CaCh}. A  first generalization
for $\PP 1 \times \PP 2$ embedded in bi-degree $(1,d)$ is treated
in \cite{DF}. The general case for $\PP 1 \times \PP 2$ embedded
in any bi-degree $(d_{1}, d_{2})$ is done in \cite{BD}. In
\cite{ChCi} the case $\PP 1 \times \PP n$ embedded i bi-degree $
(d,1)$ is treated.

In \cite{CaCh} one can find the defective cases $\PP 2 \times \PP
3$ embedded in bi-degree $(1,2)$, $\PP 3 \times \PP 4$ embedded in
bi-degree $(1,2)$ and $\PP 2 \times \PP 5$ embedded in bi-degree
$(1,2)$.

The paper \cite{CGG} studies also the cases $\PP n \times \PP m$
with bi-degree $ (n+1,1)$; $\PP 1 \times \PP 1$ with bi-degree $
(d_{1}, d_{2})$ and $\PP 2 \times \PP 2 $ with bi-degree $(2,2)$.
In \cite{Ab} the cases $\PP 1 \times \PP m $ in bi-degree
$(2d+1,2)$, $\PP 1 \times \PP m$ in bi-degree $ (2d,2)$, and $\PP
1 \times \PP m $ in bi-degree $ (d,3)$ can be found. A recent
result on  $\PP n \times \PP m$ in bi-degree $ (1,2)$ is in
\cite{AB}, where the authors prove the existence of two functions
$\underline{s}(n,m)$ and $\overline{s}(n,m)$ such that
$\sigma_s(X^{(n,m)}_{(1,2)})$ has the expected dimension for
$s\leq \underline{s}(n,m)$ and for $s\geq \overline{s}(n,m)$. 
In the same paper it is also shown that $X^{(1,m)}_{(1,2)}$ is never
defective and all the defective cases for $X^{(2,m)}_{(1,2)}$ are
described.

The varieties $\PP n \times \PP m$ embedded in bi-degree $(1,d)$
are related to the study of Grassmann defectivity (\cite{DF}).
More precisely, one can consider the Veronese variety $X$ obtained by embedding $\PP m$ in
$\PP N$ using the $d$-uple Veronese
embedding ($N= {m+d \choose d}$). Then consider, in $\mathbb G(n,N)$, the
{\it $(n, s-1)$-Grassmann secant variety} of $X$, that is, the closure of the
set of $n$-dimensional linear spaces contained in the linear span of $s$ linearly
independent points of $X$. The variety $X$ is said to be {\it $(n,s-1)$-Grassmann
defective} if the $(n, s-1)$-Grassmann secant variety of $X$ has not the expected dimension. It is
shown in \cite{DF}, following Terracini's ideas in \cite{Te1},
that $X$ is $(n,s-1)$-Grassmann defective if and only if
 the $s^{th}$ higher secant varieties of
the embedding of  $\PP n \times \PP m$ into $\PP N$ ($N=(n+1){m+d
\choose d}-1$ ), by the sections of the sheaf $\mathcal{O}(1,d)$,
 is $(s-1)$-defective. Hence, the result
proved in this paper gives information about the Grasmann
defectivity of Veronese varieties (see Remark \ref{grassref}).

The main result of this paper is Theorem \ref{t1} where we prove
the regularity of the Hilbert function of a subscheme of $\PP
{n+m}$ made of a $d$-uple $\PP {n-1}$, $t$ projective subspaces of
dimension $n$ containing it, a simple $\PP {m-1}$ and a number of
double points that is an integer multiple of $n-1$. This theorem,
together with Theorem 1.1 in \cite{CGG} (see Theorem \ref
{metodoaffineproiettivo} in this paper),  gives immediately the
regularity of the higher  secant varieties of the Segre-Veronese
variety that we are looking for.

More precisely, we consider (see  Section \ref{results}) the case
of $\PP n \times \PP m$ embedded in bi-degree $(1,d)$ for  $d\geq
3$. We prove (see Theorem \ref{t2})  that the $s^{th}$ higher
secant variety of such Segre-Veronese varieties  have the expected
dimensions for $s \leq s_1$ and for $s \geq s_2$, where

$$s_1= \max
 \left  \{
 s \in \mathbb N \  | \ s \  is \  a \  multiple\   of\
 (n+1) \ and  \  s  \leq
 \left\lfloor \frac{(n+1){m+d\choose d}}{m+n+1}  \right\rfloor
\right
 \},
 $$
 $$
s_2= \min
 \left  \{
 s \in \mathbb N \  | \ s \  is \  a \  multiple\   of\
 (n+1) \ and  \  s  \geq
 \left\lceil \frac{(n+1){m+d\choose d}}{m+n+1}  \right\rceil
\right
 \}.
 $$

%%%%%%%%%%%%%%%%%%%%
\section{Preliminaries and Notation}\label{prelim}
%%%%%%%%%%%%%%%%%%%%

We will always work with projective spaces defined over  an
algebraically closed field $K$ of characteristic $0$. Let us
recall the notion of higher secant varieties and some
 classical results which we will often use.
 For definitions and proofs we refer the reader to \cite{CGG} .

\begin{defi}\label{secant} {\rm Let $X\subset \PP N$ be a projective variety.
We define the
 $s^{th}$ {\it higher secant variety}  of $X$, denoted by $\sigma_{s}(X)$, as the Zariski closure of the union of all linear spaces spanned by  $s$ points of $X$, i.e.:
$$\sigma_{s}(X):= \overline{ \bigcup_{P_{1}, \ldots , P_{s}\in  X} \langle P_{1}, \ldots , P_{s} \rangle}\subset \PP N.$$

When $\sigma_s(X)$ does not have the expected dimension, that is
$\min\{N, s(\dim X+1)-1\},$
$X$ is
said to be $(s-1)$-{\it defective}, and the positive integer
$$
\delta _{s-1}(X) = {\rm min} \{N, s(\dim X+1)-1\}-\dim \sigma_s(X)
$$
is called the  $(s-1)${\it-defect} of $X$. }\end{defi} The basic
tool to compute the dimension of $\sigma_{s}(X)$ is Terracini's
Lemma (\cite{Te}):

\begin{lemma}
[{\bf Terracini's Lemma}]
 Let $X$  be an irreducible
variety in $\mathbb P^ N$, and let $P_1,\ldots ,P_s$  be s generic
points on $X$. Then, the tangent space to $\sigma_{s}(X)$ at a
generic point in $ \langle P_1,\ldots ,P_s \rangle$  is the linear span in
$\mathbb P^ N$ of the tangent spaces $T_{X, P_i}$ to $X$ at $P_i$,
$i=1,\ldots ,s$, hence
$$ \dim \sigma_{s}(X) = \dim  \langle T_{X,P_1},\ldots ,T_{X,P_s}\rangle.$$
\end{lemma}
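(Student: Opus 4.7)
The plan is to compute the tangent space of $\sigma_s(X)$ by parametrising it and performing a differential computation. Working in the affine cone $\hat X \subset K^{N+1}$ over $X$, the affine cone over $\sigma_s(X)$ equals the closure of the image of the addition map
$$\sigma : \hat X \times \cdots \times \hat X \longrightarrow K^{N+1}, \qquad (v_1,\dots,v_s) \longmapsto v_1+\cdots +v_s,$$
since every point of a span $\langle P_1,\ldots,P_s\rangle$ lifts to a vector of the form $\sum \lambda_i v_i$ with $v_i$ a lift of $P_i$, and each scalar $\lambda_i$ may be absorbed into $v_i$ because $\hat X$ is a cone. So the first step is just the identification of the affine cone over $\sigma_s(X)$ with the closure of $\mathrm{Im}(\sigma)$.

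Next I would compute the differential of $\sigma$ at a smooth point $(v_1,\ldots,v_s)$ of $\hat X^s$. Since $\sigma$ is linear in each factor, the chain rule gives
$$d\sigma : T_{v_1}\hat X\oplus \cdots \oplus T_{v_s}\hat X\longrightarrow K^{N+1},\qquad (w_1,\ldots,w_s)\longmapsto w_1+\cdots+w_s,$$
so its image is the vector-space sum $T_{v_1}\hat X+\cdots+T_{v_s}\hat X$. Projectivising, this sum corresponds exactly to the linear span $\langle T_{X,P_1},\ldots, T_{X,P_s}\rangle$ in $\PP N$, because each affine tangent space $T_{v_i}\hat X$ is the affine cone over the embedded projective tangent space $T_{X,P_i}$ and in particular already contains the line through $v_i$.

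To close the argument I would invoke generic smoothness. Since we work in characteristic $0$ (as recalled at the start of Section~\ref{prelim}) and the restricted morphism $\sigma\colon \hat X^s \to \widehat{\sigma_s(X)}$ is dominant, there is a dense open set of $\hat X^s$ on which $d\sigma$ is surjective onto the tangent space of $\widehat{\sigma_s(X)}$ at the image point. For generic $P_1,\ldots,P_s$ on $X$ and a generic point $Q\in \langle P_1,\ldots,P_s\rangle$ one therefore obtains
$$T_{Q}\widehat{\sigma_s(X)}\;=\; T_{v_1}\hat X+\cdots+T_{v_s}\hat X,$$
and passing to projective tangent spaces gives the asserted equality $T_{\sigma_s(X),Q}=\langle T_{X,P_1},\ldots,T_{X,P_s}\rangle$.

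The main obstacle is precisely this last step: one needs the image of the differential of the parametrisation to coincide with the tangent space to the image at a general point. In characteristic $0$ this is standard generic smoothness, but the conclusion of Terracini's lemma genuinely can fail in positive characteristic, so the hypothesis on $K$ is being used in an essential way here. Everything else — the Minkowski description of $\widehat{\sigma_s(X)}$, the linearity of $d\sigma$, and the identification of its image with the projective span of the $T_{X,P_i}$'s — is a routine verification.
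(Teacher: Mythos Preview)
Your argument is correct and is the standard modern proof of Terracini's Lemma via the sum map on affine cones together with generic smoothness in characteristic~$0$. However, the paper does not prove this statement at all: it simply quotes the lemma as a classical result, with a reference to Terracini's original article~\cite{Te}, and then moves on to apply it. So there is nothing to compare your proof against; you have supplied a complete argument where the paper gives only a citation.
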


A consequence of Terracini's Lemma is the following corollary (see
\cite[Section 1]{CGG} or  \cite [Section 2]{AB} for a proof of
it).

\begin {corol}\label{corTer}
Let $X^{(n,m)}_{(1,d)} \subset \mathbb P^N$ be the {\it
Segre-Veronese variety}  image of  the embedding of $\mathbb P^n
\times \mathbb P^m$ by the sections of the sheaf ${\cal O}(1, d)$
into $\mathbb P^N$, with $N=(n+1) {m+d \choose d}-1$. Then
$$
\dim   \sigma_s \left ( X^{(n,m)}_{(1,d)}   \right ) = N - \dim (I_Z)_{(1,d)} =
 H (Z,(1,d))  -1 ,
$$
where $Z \subset \mathbb P^n \times \mathbb P^m$ is a set of $s$
generic double points, $I_Z $ is the multihomogeneous ideal of $Z$
in $R = K [x_0, \dots, x_n,y_0, \dots, y_m]$, the multigraded
coordinate ring of $ \mathbb P^n\times \mathbb P^m$, and  $ H
(Z,(1,d)) $ is the multigraded Hilbert function of $Z$.
\end{corol}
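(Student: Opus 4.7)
The plan is to combine Terracini's Lemma with the standard apolarity/duality dictionary that translates tangent spaces to a Segre--Veronese variety into linear systems of partially symmetric forms vanishing to order two at prescribed points.

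First I would apply Terracini's Lemma to reduce the problem: for generic $P_1,\ldots,P_s$ on $X=X^{(n,m)}_{(1,d)}$, we have $\dim\sigma_s(X)=\dim\langle T_{X,P_1},\ldots,T_{X,P_s}\rangle$. Let $Q_i\in\PP n\times\PP m$ be the preimage of $P_i$ under the Segre--Veronese parameterization $\varphi\colon\PP n\times\PP m\to\PP N$ defined by the complete linear system $|\mathcal{O}(1,d)|$. Since $\varphi$ is an embedding, genericity of the $P_i$ on $X$ is the same as genericity of the $Q_i$ on $\PP n\times\PP m$.

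Next I would identify, locally around each $Q_i$, the affine cone over $T_{X,P_i}$ with the image of the Jacobian of $\varphi$ at $Q_i$, thought of as a subspace of $R_{(1,d)}^\vee$. Equivalently, a hyperplane $H\subset\PP N$ (a bidegree $(1,d)$ form $f\in R_{(1,d)}$) contains $T_{X,P_i}$ if and only if $f$ vanishes at $Q_i$ together with all its first partial derivatives in both sets of variables; that is, $f\in (I_{2Q_i})_{(1,d)}$, the degree-$(1,d)$ piece of the ideal of the double point $2Q_i$. This is the one computational step that really needs checking: it is a straightforward derivative calculation using the parameterization by monomials of bidegree $(1,d)$, but it is the crux of the whole translation.

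Setting $Z=2Q_1+\cdots+2Q_s\subset\PP n\times\PP m$, duality then gives
$$\langle T_{X,P_1},\ldots,T_{X,P_s}\rangle^{\perp}=\bigcap_{i=1}^{s}(I_{2Q_i})_{(1,d)}=(I_Z)_{(1,d)},$$
and hence
$$\dim\sigma_s(X)=\dim\langle T_{X,P_1},\ldots,T_{X,P_s}\rangle=N-\dim(I_Z)_{(1,d)}.$$
Finally, from the exact sequence $0\to(I_Z)_{(1,d)}\to R_{(1,d)}\to R_{(1,d)}/(I_Z)_{(1,d)}\to 0$ together with $\dim R_{(1,d)}=(n+1)\binom{m+d}{d}=N+1$ and $H(Z,(1,d))=\dim R_{(1,d)}/(I_Z)_{(1,d)}$, we conclude $N-\dim(I_Z)_{(1,d)}=H(Z,(1,d))-1$, which is the second equality in the statement.

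The main obstacle is the tangent-space identification in the middle step; everything else is Terracini plus bookkeeping. Once that identification is in hand, the passage from a span of projective tangent spaces to a postulation problem for generic double points is automatic, and it is precisely this reformulation that will let us attack the dimension of $\sigma_s(X^{(n,m)}_{(1,d)})$ via the Hilbert-function methods of Theorem~\ref{t1}.
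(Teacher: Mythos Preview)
Your argument is correct and is exactly the standard one: Terracini's Lemma plus the identification of hyperplanes containing $T_{X,P_i}$ with forms of bidegree $(1,d)$ vanishing to order two at $Q_i$, followed by the dimension count for $R_{(1,d)}$. The paper itself does not supply a proof of this corollary but simply refers to \cite[Section 1]{CGG} and \cite[Section 2]{AB}, where precisely this argument is carried out; so your proposal matches the intended approach.
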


Now we recall  the fundamental  tool which allows us to convert
certain questions about ideals of varieties in multiprojective
space to questions about ideals in standard polynomial rings (for
a more general statement see \cite[Theorem 1.1]  {CGG}) .

\begin{theorem}\label{metodoaffineproiettivo}
Let $X^{(n,m)}_{(1,d)} \subset \mathbb P^N$  and $Z \subset
\mathbb P^n \times \mathbb P^m$ as in Corollary \ref{corTer}. Let
$H_{1}, H_{2}\subset \PP {n+m}$ be  generic projective linear
spaces of dimensions $n-1$ and $m-1$, respectively, and let
$P_{1}, \dots ,P_{s} \in \mathbb P^{n+m}$  be
generic points. Denote by
$$dH_1+H_2+2P_{1}+ \cdots + 2P_s \subset \mathbb P^{n+m}$$
 the scheme defined
by the ideal sheaf
${\mathcal I}^{d}_{H_1}\cap {\mathcal I}_{H_2} \cap {\mathcal I}^{2}_{P_1}\cap \dots \cap
{\mathcal I}^{2}_{P_s}\ \subset {\mathcal O}_{P^{n+m }  }$.
Then
$$
 \dim (I_Z)_{(1,d)} = \dim (I_{dH_1+H_2+2P_1+ \cdots + 2P_s})_{d+1}
$$
hence
$$
\dim  \sigma_s \left ( X^{(n,m)}_{(1,d)}   \right )  = N - \dim (I_{dH_1+H_2+2P_1+ \cdots + 2P_s})_{d+1}  .
$$
\end{theorem}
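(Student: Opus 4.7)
The plan is to construct an explicit $K$-linear isomorphism
$$
\Phi\colon R_{(1,d)} \xrightarrow{\;\sim\;} \bigl(I_{dH_1+H_2}\bigr)_{d+1}
$$
between the bi-degree $(1,d)$ piece of $R=K[x_0,\ldots,x_n,y_0,\ldots,y_m]$ and the degree-$(d+1)$ forms on $\PP{n+m}$ having the prescribed vanishing on $H_1$ and $H_2$, and then to verify that $\Phi$ carries $(I_Z)_{(1,d)}$ isomorphically onto $(I_{dH_1+H_2+2P_1+\cdots+2P_s})_{d+1}$. The first displayed equality of the theorem is then immediate, and the second formula follows by combining with Corollary~\ref{corTer}.

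Using that the dimensions of interest are invariant under projective coordinate changes, I assume
$H_1=V(z,\eta_1,\ldots,\eta_m)$ and $H_2=V(z,\xi_1,\ldots,\xi_n)$
in coordinates $[z:\xi_1:\cdots:\xi_n:\eta_1:\cdots:\eta_m]$ on $\PP{n+m}$. For $f\in R_{(1,d)}$ I set
$$
\tilde f(\xi,\eta) := f(1,\xi_1,\ldots,\xi_n;\,1,\eta_1,\ldots,\eta_m),\qquad \Phi(f)(z,\xi,\eta) := z^{d+1}\,\tilde f(\xi/z,\eta/z).
$$
Because $f$ is bi-homogeneous of bi-degree $(1,d)$, the polynomial $\tilde f$ has $\xi$-degree $\leq 1$ and $\eta$-degree $\leq d$, so every monomial $z^{\gamma}\xi^{\alpha}\eta^{\beta}$ of $\Phi(f)$ satisfies $|\alpha|\leq 1$ and $|\beta|\leq d$. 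Since the total degree is $d+1$, these conditions rewrite as $\gamma+|\beta|\geq d$ and $\gamma+|\alpha|\geq 1$, which are exactly $\Phi(f)\in I_{H_1}^{d}$ and $\Phi(f)\in I_{H_2}$, respectively. The inverse of $\Phi$ is dehomogenization in $z$ followed by bihomogenization in $x_0$ and $y_0$, so $\Phi$ is a bijection onto $(I_{dH_1+H_2})_{d+1}$; a routine monomial count shows both spaces have dimension $(n+1)\binom{m+d}{d}$.

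The remaining step matches the fat-point conditions. A generic set $P_1,\ldots,P_s\in\PP n\times\PP m$ lies in the chart $\{x_0y_0\neq 0\}$ and maps under the dehomogenization appearing in $\Phi$ to generic points $P_1',\ldots,P_s'$ in the chart $\{z\neq 0\}\subset\PP{n+m}$. Vanishing to order two is a local, first-order Taylor condition, and $\Phi$ restricts to an affine isomorphism between the two matched charts, so the multigraded double-point ideal $\mathcal{I}_{P_i}^{2}$ in $\PP n\times\PP m$ corresponds under $\Phi$ to the standard double-point ideal $\mathcal{I}_{P_i'}^{2}$ in $\PP{n+m}$. Consequently $\Phi$ restricts to the desired isomorphism $(I_Z)_{(1,d)} \cong (I_{dH_1+H_2+2P_1+\cdots+2P_s})_{d+1}$. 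I expect the main obstacle to be precisely this last identification: one must carefully check that the scheme-theoretic double point defined in the bigraded coordinate ring of $\PP n\times\PP m$ matches the standard double point in $\PP{n+m}$ after the dehomogenize-rehomogenize step, which reduces to checking that both local rings at $P_i$ and $P_i'$ share the same system of local parameters on the common affine chart.
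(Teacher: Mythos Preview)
Your argument is correct. The paper does not give its own proof of this statement: it is recalled from \cite[Theorem~1.1]{CGG} without proof, so there is no in-paper argument to compare against. What you wrote is precisely the ``affine--projective'' method of \cite{CGG}: identify the affine chart $\{x_0y_0\neq 0\}$ of $\PP n\times\PP m$ with the chart $\{z\neq 0\}$ of $\PP{n+m}$, and observe that dehomogenize--rehomogenize sets up a bijection between bihomogeneous forms of bi-degree $(1,d)$ and degree-$(d+1)$ forms lying in $I_{H_1}^d\cap I_{H_2}$, under which the local order-two vanishing conditions match. Your choice of $H_1,H_2$ inside the hyperplane $\{z=0\}$ is legitimate because a generic pair $(H_1,H_2)$ of those dimensions spans a hyperplane and is projectively equivalent to your model; and the point you flag as the main obstacle (matching bigraded double points with ordinary double points) is indeed handled by the fact that both reduce to the same $\mathfrak m^2$ condition on $\tilde f$ in the common affine chart. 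So your proposal reproduces the cited argument and nothing is missing.
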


Since we will make use of Castelnuovo's inequality several times,
we recall it here  (for notation and proof we refer to \cite{AH2},
Section 2).

 \begin{lemma} [{\bf Castelnuovo's inequality}] \label{castelnuovo}
Let $H \subset \mathbb P^N$ be a hyperplane, and let $X \subset \mathbb P
^N$ be a  scheme. We denote by $Res_H X$
the scheme defined by the ideal $(I_X:I_H)$ and we call it the
{\it residual scheme} of $X$ with respect to $H$, while the scheme
$Tr_H X \subset H$ is the schematic intersection $X\cap H$, called
the {\it trace} of $X$ on $H$.
Then
$$
\dim (I_{X, \mathbb P^N})_t  \leq  \dim (I_{ Res_H X, \mathbb P^N})_{t-1}+
\dim (I_{Tr _{H} X, H})_t.
$$
\end{lemma}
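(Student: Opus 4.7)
The plan is to derive Castelnuovo's inequality from a short exact sequence of graded ideals in the homogeneous coordinate ring $R = K[x_0,\dots,x_N]$ of $\PP N$, coming from multiplication by a linear form cutting out $H$. Once the sequence is in place, the inequality will fall out of a single dimension count in degree $t$.

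First I would pick a linear form $h\in R_1$ with $V(h)=H$, so that $I_H=(h)$ and, by definition of the residual scheme, $I_{Res_H X}=(I_X:I_H)=(I_X:h)$. Since $R$ is a polynomial ring, $h$ is a non-zero-divisor, so multiplication by $h$ gives an injection $R(-1)\hookrightarrow R$. Restricting it to $(I_X:h)(-1)$ produces an injection whose image is exactly $(h)\cap I_X$, so the cokernel is
\[
I_X/\bigl((h)\cap I_X\bigr) \;\cong\; \bigl(I_X+(h)\bigr)/(h).
\]
Under the canonical identification of $R/(h)$ with the homogeneous coordinate ring of $H$, the right-hand side is precisely the ideal in $H$ of the scheme-theoretic intersection $Tr_H X = X\cap H$. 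I therefore obtain the short exact sequence of graded $R$-modules
\[
0 \to I_{Res_H X}(-1) \xrightarrow{\;\cdot h\;} I_X \to I_{Tr_H X, H} \to 0.
\]

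To finish, I would pass to the degree $t$ component, using that $(I_{Res_H X}(-1))_t = (I_{Res_H X})_{t-1}$, and take dimensions. Either directly from the short exact sequence (equality) or just from its left-exact part (inequality), this yields
\[
\dim (I_X)_t \;\leq\; \dim (I_{Res_H X})_{t-1} + \dim (I_{Tr_H X, H})_t,
\]
which is exactly the claim.

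The only real point of care is notational: one has to check that what appears as the third term of the short exact sequence, namely $(I_X+(h))/(h)$, agrees with the convention for $I_{Tr_H X, H}$ used in the paper (saturated or not), and that $h$ is indeed a non-zero-divisor on the modules involved (which, in the polynomial ring setting above, is automatic). Modulo this bookkeeping the whole argument is a one-line application of a completely standard exact sequence, with no geometric input beyond the definitions of $Res_H X$ and $Tr_H X$.
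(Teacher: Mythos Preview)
Your argument is correct and is the standard derivation of Castelnuovo's inequality via the residual exact sequence
\[
0 \to I_{Res_H X}(-1) \xrightarrow{\ \cdot h\ } I_X \to (I_X+(h))/(h) \to 0,
\]
followed by a dimension count in degree $t$. Your caveat about whether $I_{Tr_H X,H}$ is taken saturated or not is exactly the right thing to flag: with the non-saturated ideal $(I_X+(h))/(h)$ one gets equality, and passing to the saturation can only enlarge the degree-$t$ piece, so the stated inequality holds in either convention.

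As for comparison with the paper: there is nothing to compare against. The paper does not prove Lemma~\ref{castelnuovo}; it simply records the statement and refers the reader to \cite{AH2}, Section~2, for notation and proof. Your proof is precisely the argument one finds there (and in essentially every source), so it is fully in line with what the paper invokes.
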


%%%%%%%%%%%%%%%%%%%%
\section{Segre-Veronese embeddings of $\mathbb P^n \times \mathbb P^m$ }\label{results}
%%%%%%%%%%%%%%%%%%%%

Now that we have introduced all the necessary tools that we need for the main theorem of this paper we can state and prove it.

\begin{theorem}\label{t1}
 Let   $d\geq 3$, $n,m \geq 1$ and  let $s=(n+1)q$ be an integer
multiple of $n+1$. Let  $ P_1, \dots ,P_s \in \mathbb P^ {n+m}$ be
generic points and $H_1\simeq \mathbb P^{n-1}, H_2\simeq \mathbb
P^{m-1}$ be generic linear spaces  in $\mathbb P^ {n+m}$.
 Let $W_{1}, \ldots , W_{t}\subset \mathbb P^ {n+m}$ be $t$ generic linear spaces of dimension $n$ containing $H_{1}$.
 Now consider the scheme
\begin{equation}\label{X}
\mathbb{X}:=dH_{1}+H_{2}+2P_{1}+ \cdots + 2P_{s}+W_{1}+ \cdots + W_{t}
\end{equation}
Then for any $q, t\in \mathbb{N}$ the dimension of the degree
$d+1$ piece of the ideal $I_{\mathbb{X}}$ is the expected one,
that is
$$\dim(I_{\mathbb{X}})_{d+1}=\max \left \{ (n+1){m+d \choose d}-s(n+m+1)-t(n+1)\ ; 0\right \}. $$

\end{theorem}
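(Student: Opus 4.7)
The plan is to prove Theorem \ref{t1} by induction, with Castelnuovo's inequality (Lemma \ref{castelnuovo}) as the main reduction tool together with specialization and semicontinuity arguments. The role of the auxiliary $n$-planes $W_1,\dots,W_t$ is precisely to close the induction: specializing $n+1$ generic double points onto a generic $n$-plane through $H_1$ turns the configuration into one of the same shape with $q$ decreased by one and $t$ increased by one, so the theorem must be stated with the $W_i$'s built in from the start.

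Because of the $\max$ in the formula, the claim splits into two regimes. When $(n+1)\binom{m+d}{d} - s(n+m+1) - t(n+1) \geq 0$, one must show that the scheme imposes independent conditions on $(I_{dH_1+H_2})_{d+1}$; when it is $\leq 0$, one must show $(I_\mathbb{X})_{d+1}=0$. By semicontinuity it is enough in each case to exhibit a single specialization of $\mathbb{X}$ for which the claimed dimension is attained, and the two cases tend to follow from the same degeneration.

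The inductive step uses Castelnuovo with the hyperplane $H = \langle H_1, H_2\rangle$, which has dimension $(n-1)+(m-1)+1 = n+m-1$ and is therefore a hyperplane of $\mathbb{P}^{n+m}$. After specializing (justified by semicontinuity) one of the $W_j$'s together with $n+1$ of the $P_j$'s to lie in $H$, Castelnuovo's inequality reads
$$
\dim(I_\mathbb{X})_{d+1} \leq \dim(I_{Res_H \mathbb{X}})_{d} + \dim(I_{Tr_H \mathbb{X},\,H})_{d+1}.
$$
The trace lives in $H\simeq \mathbb{P}^{n+m-1}$ and retains the form ``$d$-fat $H_1$ plus simple $H_2$ plus double points plus $n$-planes through $H_1$'', so the inductive hypothesis applies in one lower ambient dimension; the residual replaces $dH_1$ by $(d-1)H_1$, removes $H_2$ and the $W_j$ placed in $H$, and keeps the remaining components, yielding an instance of the theorem with $d$ replaced by $d-1$. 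The numerical identities are arranged so that the two expected dimensions coming from the inductive hypothesis on the right sum to the expected dimension on the left, forcing equality in Castelnuovo.

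The principal obstacle is the combinatorial bookkeeping: one must choose the number of $P_j$'s and $W_j$'s specialized onto $H$ so that (i) the divisibility hypothesis $s=(n+1)q$ is preserved on both the trace and residual sides, and (ii) the expected dimensions on both sides of Castelnuovo add up exactly to the expected dimension on the left. This is precisely what forces the hypothesis $s=(n+1)q$: specializing in groups of $n+1$ double points at a time preserves the shape of the statement at every stage. Once the inductive mechanism is in place, a small set of base cases --- small $d$ (notably $d=3$), $q=0$, and $n=1$ or $m=1$ --- must be verified directly, possibly appealing to the prior literature cited in the introduction.
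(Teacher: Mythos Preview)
Your inductive scheme does not close, and the choice of hyperplane is the concrete obstruction. If $H=\langle H_1,H_2\rangle$, then both $H_1$ and $H_2$ lie inside $H$, so in the trace on $H\simeq\mathbb P^{n+m-1}$ you still have $H_1\simeq\mathbb P^{n-1}$ and $H_2\simeq\mathbb P^{m-1}$. But Theorem~\ref{t1} in an ambient $\mathbb P^{n'+m'}$ requires $\dim H_1=n'-1$ and $\dim H_2=m'-1$ with $n'+m'=n+m-1$; there is no pair $(n',m')$ matching both. Likewise, in the residual you get $(d-1)H_1$ and \emph{no} $H_2$ at all, so this is not ``an instance of the theorem with $d$ replaced by $d-1$''. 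Even if you tried to absorb this into a separate auxiliary statement, you would be pushed to a base case at $d=2$, which the theorem does not cover and which is genuinely different (that is exactly the defective Segre--Veronese range). Finally, your explanation conflates two different specializations: moving $n+1$ points onto a generic $n$-plane through $H_1$ (which would change $(q,t)$) and moving points onto the hyperplane $H$ (which is what Castelnuovo needs); these are not the same operation, and neither alone makes the arithmetic balance.

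The paper's argument avoids all of this by inducting on $n$, not on $d$. The hyperplane $H$ is chosen generic through $H_2$ only, so on the trace $H_1$ drops to $H'_1=H_1\cap H\simeq\mathbb P^{n-2}$ while $H_2$ is unchanged, and the trace is exactly an instance of the theorem with $n$ replaced by $n-1$ (and $t$ replaced by $t+q$, the extra $n$-planes coming from the spans $V_i=\langle H_1,P_i\rangle$, which are in the base locus and are added before applying Castelnuovo). One specializes $nq$ of the $s=(n+1)q$ double points onto $H$, so the trace carries $nq$ double points --- a multiple of $n$ --- and the residual carries the remaining $q$ double points. The residual, after noting that every degree-$d$ form through $dH_1$ is a cone with vertex containing $H_1$, is handled by projecting from $H_1$ into $\mathbb P^m$: the $W_i$ and the specialized $V_i$ become simple points, the $q$ unspecialized $2P_i+V_i$ become double points, and Alexander--Hirschowitz (here using $d\ge 3$) gives the exact dimension. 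The two contributions are identical up to a factor of $n$, so they sum to the expected value and Castelnuovo is sharp. The base case $n=1$ is done by the same trace/residual split, with both sides reducing directly to Alexander--Hirschowitz in $\mathbb P^m$.
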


\begin{proof}
We will prove the theorem by induction on $n$.

A  hypersurface   defined by a form of $(I_{dH_1})_{d+1}$ cuts on
$W_i \simeq \mathbb P^n$ a   hypersurface which has $H_1$ as a
fixed component of multiplicity $d$.
 It follows that
$$\dim (I_{dH_1 ,W_i}  )_{d+1} = \dim (I_{\emptyset,W_i}  )_1=n+1.
$$
Hence the expected number of conditions that a linear space $W_i$ imposes  to the forms of
$(I_{\mathbb{X}})_{d+1}$ is at most $n+1$.
Moreover a double point imposes at most $n+m+1$ conditions.
So, since by Theorem \ref{metodoaffineproiettivo} with $Z = \emptyset $ we get
$$
\dim (I_{dH_1+H_2})_{d+1}  =\dim  R_{(1,d)} = (n+1){m+d \choose d},
$$
(where $R = K [x_0, \dots, x_n,y_0, \dots, y_m] $), then we have
\begin{equation}\label{disug}
 \dim(I_{\mathbb{X}})_{d+1} \geq    (n+1){m+d \choose d}-s(n+m+1)-t(n+1).
 \end{equation}
\\

Now let $H\subset \PP {n+m}$ be a generic hyperplane containing
$H_{2}$ and let $\widetilde{ \mathbb X}$ be the scheme obtained
from $ \mathbb  X$ by  specializing the $nq$ points
$P_1,\dots,P_{nq}$ on $H$, ($P_{nq+1}, \dots , P_{s}$ remain
generic points, not lying on $ H$).

Since by the semicontinuity of the Hilbert Function $ \dim(I_{
\widetilde {  \mathbb X } })_{d+1} \geq
  \dim(I_{  {  \mathbb X } })_{d+1}$, by (\ref{disug}) we have
\begin{equation}\label{disug2}
 \dim(I_{  \widetilde {  \mathbb X } })_{d+1} \geq
 (n+1){m+d \choose d}-s(n+m+1)-t(n+1).
 \end{equation}

Let $V_i = \langle H_1, P_i \rangle \simeq \mathbb P^n $. Since the linear spaces $V_i  $
   are in the base locus of the hypersurfaces defined by the forms of
   $(I_{  \widetilde {  \mathbb X } })_{d+1}$, we have
\begin{equation}\label{spazifissi}
   (I_{  \widetilde {  \mathbb X } })_{d+1}=
   (I_{  \widetilde {  \mathbb X }+ V_1+\dots+V_{s} })_{d+1} .
\end{equation}

Consider the residual scheme of  $   ( \widetilde {  \mathbb X }    + V_1+\dots+V_s  ) $ with
respect to $H$:
 $$
 Res_{H}   ( {  \widetilde {  \mathbb X }   }   + V_1+\dots+V_s )=dH_{1}+W_{1}+ \cdots + W_{t}+P_{1}+\cdots + P_{nq}+2P_{nq+1}+ \cdots + 2P_{s}  + V_1+\dots+V_{s}
 $$
 $$=dH_{1}+W_{1}+ \cdots + W_{t}+2P_{nq+1}+ \cdots + 2P_{s}  + V_1+\dots+V_{s}
 \subset \mathbb P^{n+m}   .
 $$
Any form of degree $d$ in  $I_{ Res _H  { ( \widetilde {  \mathbb X }   }  + V_1+\dots+V_s)} $
represents a cone whose vertex contains
$H_1$.
Hence if $   \mathbb Y   \subset \mathbb P^m$ is the scheme obtained by projecting  $ Res _H  ({  \widetilde {  \mathbb X }   }+  V_1+\dots+V_s )$
 from $H_1$ in a $\mathbb P^m$, we have:
\begin{equation} \label{res1}
\dim(I_{ Res _H  { ( \widetilde {  \mathbb X }   }  + V_1+\dots+V_s)})_d = \dim(I_{   \mathbb Y    } )_d .
\end{equation}

 Since the image by this projection of each $W_i$ is a point, and  for $1 \leq i \leq nq$ the image of $P_i+V_i$ is a simple point, and
 for $nq+1 \leq i \leq s$ the image of $2P_i+V_i$ is a double point,
 we have that  $   \mathbb Y$ is a  scheme
 consisting of  $t+ nq $ generic  points  and $ q $ generic  double points.

Now by the Alexander-Hirschowitz Theorem (see \cite{AH}), since
$d>2$ and $t+nq >1$  we have that the dimension of the degree $d$
part of the ideal of  $q$ double points plus $t+ nq  $ simple
points is always as expected. So we get
\begin{equation} \label{res2}
 \dim(I_{   \mathbb Y    } )_d =\max \left \{  {m+d\choose d} - q(m+1)-t-nq  \ ;\  0 \right
 \}.
\end{equation}
Now let $n=1$. In this case we have: $s=2q$,
\begin{equation}\label{resxn=1}
\dim(I_{ Res _H  { ( \widetilde {  \mathbb X }   }  + V_1+\dots+V_s)})_d =
 \dim(I_{   \mathbb Y    } )_d =\max \left \{  {m+d\choose d} - q(m+1)-t-q  \ ;\  0 \right \},
\end{equation}
moreover $H_1$ is a point, $H_{1}\cap H$ is the empty set, the
$W_i$ and the $V_i$ are lines, and $V_i$ is the line $H_1P_i$.

Set  $W'_{i}=W_{i}\cap H$, $V'_{i}=V_{i}\cap H$.  Note that for $1\leq i \leq q$ we have $V'_{i}=P_i.$
The trace on $H$ of
$     {   \widetilde {  \mathbb X }  +V_1+ \dots +V_{s} } $ is:
$$Tr_{H}    (   {   \widetilde {  \mathbb X }   } +V_1+ \dots +V_{s})
=H_2+2P_{1}+ \cdots + 2P_{q}+ W'_{1}+ \cdots + W'_{t}+V'_1+ \dots +V'_{2q} =
$$
$$=H_2+2P_{1}+ \cdots + 2P_{q}+ W'_{1}+ \cdots + W'_{t}+V'_{q+1}+ \dots +V'_{2q}
  \subset H \simeq  \PP {m}
  .$$
  So $Tr_{H}    (   {   \widetilde {  \mathbb X }   } +V_1+ \dots +V_{s})\subset H$  is a scheme in $\PP {m}$ union of $H_2 \simeq \PP {m-1}$, plus $q$ generic double points and $t+q$ generic simple points.
  As above, by \cite{AH}, since $d>2$ and $t+q \geq1$  we get
 $$
\dim(I_{ Tr _H  { ( \widetilde {  \mathbb X }   }  + V_1+\dots+V_s)})_{d+1} =
\dim(I_{  2P_{1}+ \cdots + 2P_{q}+ W'_{1}+ \cdots + W'_{t}+V'_{q+1}+ \dots +V'_{2q}  })_{d}
$$
\begin{equation}\label{trxn=1}
=\max \left \{  {m+d\choose d} - q(m+1)-t-q  \ ;\  0 \right \}.
\end{equation}

By Castelnuovo's inequality (see Lemma \ref {castelnuovo}), by (\ref{resxn=1}) and  (\ref{trxn=1})
we get
\begin{equation}\label{disug3}
\dim(I_{{ \widetilde {  \mathbb X }   }  + V_1+\dots+V_s})_{d+1} \leq
\max \left \{ 2 {m+d\choose d} - 2q(m+1)-2t-2q  \ ;\  0 \right \},
\end{equation}
so by   (\ref{disug2}),  (\ref{spazifissi})  and (\ref{disug3}) we have
 $$
 \dim(I_{{ \widetilde {  \mathbb X }   } })_{d+1} =
 \max \left \{ 2 {m+d\choose d} - 2q(m+2)-2t  \ ;\  0 \right \}.
$$
From here, by   (\ref{disug}) and by the semicontinuity of the Hilbert Function we get
 $$
 \dim(I_{{  \mathbb X }   })_{d+1} =
 \max \left \{ 2 {m+d\choose d} - 2q(m+2)-2t  \ ;\  0 \right \}
$$
and the result is proved  for $n=1$.

Let $n>1$.

Set: $H'_{1}=H_{1}\cap H$;  $W'_{i}=W_{i}\cap H$;  $V'_{i}=V_{i}\cap H$.
With this notation the trace of
$     {   \widetilde {  \mathbb X }  +V_1+ \dots +V_s } $ on $H$ is:
$$Tr_{H}    (   {   \widetilde {  \mathbb X }   } +V_1+ \dots +V_s)
=dH'_{1}+H_2+2P_{1}+ \cdots + 2P_{nq}+ W'_{1}+ \cdots + W'_{t}+V'_1+ \dots +V'_s
  \subset H \simeq  \PP {n+m-1}
  .$$

Anaugously as above, observe that the linear spaces $V'_i = \langle H'_1, P_i \rangle \simeq \mathbb P^n $
   are in the base locus for the hypersurfaces defined by the forms of
   $(I_{ dH'_{1}+2P_{i} })_{d+1}$, hence the parts of degree $d+1$ of the ideals of
   $Tr_{H}    (   {   \widetilde {  \mathbb X }   } +V_1+ \dots +V_s)$ and
   of
     $Tr_{H}    (   {   \widetilde {  \mathbb X }   } +V_{nq+1}+ \dots +V_s)$
    are equal. So we have
   $$
 (I_{ Tr_{H}    (   {   \widetilde {  \mathbb X }   } +V_1+ \dots +V_s)})_{d+1}=
(I_ { Tr_{H}    (   {   \widetilde {  \mathbb X }   } +V_{nq+1}+ \dots +V_s)} )_{d+1}=
(I_{         \mathbb T      })_{d+1} ,
   $$
   where
   $$\mathbb T=
   dH'_{1}+H_2+2P_{1}+ \cdots + 2P_{nq}+ W'_{1}+ \cdots + W'_{t}+V'_{nq+1}+ \dots +V'_s
   \subset   \PP {n+m-1},
   $$
   that is, $\mathbb T$
is union of the $ d$-uple linear space $H_1'  \simeq \mathbb P^{n-2}$, the linear space
$H_2\simeq \mathbb P^{m-1}$,  $t+q$ generic linear spaces through $H_1'  $,  and
 $nq$ double points. Hence by the inductive hypothesis we have
\begin{equation}\label{tr1}
\dim(I_{\mathbb{T}})_{d+1}=\max \left \{ n{m+d \choose d}-nq(n+m)-(t+q)n\ ; 0\right \}.
\end{equation}

By (\ref{spazifissi}),   by Lemma \ref{castelnuovo}, by (\ref{res1}), (\ref{res2}) and (\ref{tr1})
we get
$$
  \dim (I_{  \widetilde {  \mathbb X } })_{d+1}\leq
  \max \left \{  {m+d\choose d} - q(m+1)-t-nq  \ ;\  0 \right \} +
  \max \left \{ n{m+d \choose d}-nq(n+m)-(t+q)n\ ; 0\right \}
  $$
  $$=
    \max \left \{  {m+d\choose d} - q(m+1)-t-nq  \ ;\  0 \right \} +
 \max \left \{ n \left ( {m+d\choose d} - q(m+1)-t-nq \right )\ ; 0\right \}
  $$
  $$=
   \max \left \{ (n+1) \left ( {m+d\choose d} - q(m+1)-t-nq \right )\ ; 0\right \}
  $$
  $$=
  \max \left \{ (n+1){m+d \choose d}-s(n+m+1)-t(n+1)\ ; 0\right \}
  .$$
Now the conclusion follows from   (\ref{disug}) and the semicontinuity of the Hilbert Function
 and this ends the proof.
\end{proof}

%%%%%%%%%%%%%%%%%%%%%%%%%%%%%%%
%%%%%%%%%%%%%%%%%%%%%%%

\begin{corol}\label{c1}
 Let   $d\geq 3$, $n,m \geq 1$ and  let
 $$s_1:= \max
 \left  \{
 s \in \mathbb N \  | \ s \  is \  a \  multiple\   of\
 (n+1) \ and  \  s  \leq
 \left\lfloor \frac{(n+1){m+d\choose d}}{m+n+1}  \right\rfloor
\right
 \}
 $$
 $$
s_2:= \min
 \left  \{
 s \in \mathbb N \  | \ s \  is \  a \  multiple\   of\
 (n+1) \ and  \  s  \geq
 \left\lceil \frac{(n+1){m+d\choose d}}{m+n+1}  \right\rceil
\right
 \}.
 $$
 Let  $ P_1, \dots ,P_{s}  \in \mathbb P^ {n+m}$  be generic points and $H_1\simeq \mathbb P^{n-1}, H_2\simeq \mathbb P^{m-1}$ be generic linear spaces  in $\mathbb P^ {n+m}$.
 Consider the scheme
$$
\mathbb{X}:=dH_{1}+H_{2}+2P_{1}+ \cdots + 2P_{s}.
$$
Then for any $s \leq s_1$ and any $s \geq s_2$ the dimension of
$(I_{\mathbb{X}})_{d+1}$ is the expected one, that is
$$\dim(I_{\mathbb{X}})_{d+1}=
\left \{
\begin{matrix} (n+1){m+d \choose d}-s(n+m+1) &\   for  \ s  \leq s_1   \\
\\
0 & for  \ s  \geq s_2   \\

\end{matrix}
\right.
$$

\end{corol}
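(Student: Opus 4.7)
The plan is to bootstrap Corollary \ref{c1} from Theorem \ref{t1} (applied with $t=0$) by comparing an arbitrary $s$ to the nearest boundary value $s_1$ or $s_2$, using the fact that each generic double point imposes at most $n+m+1$ conditions on forms of degree $d+1$ in $\mathbb{P}^{n+m}$. By construction, both $s_1$ and $s_2$ are multiples of $n+1$, so Theorem \ref{t1} with no $W_i$'s applies directly at these values and yields
$$\dim(I_{\mathbb{X}_{s_1}})_{d+1} = (n+1)\binom{m+d}{d} - s_1(n+m+1), \qquad \dim(I_{\mathbb{X}_{s_2}})_{d+1} = 0,$$
where I write $\mathbb{X}_k := dH_1+H_2+2P_1+\cdots+2P_k$. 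The first identity uses $s_1(n+m+1) \leq (n+1)\binom{m+d}{d}$, so the $\max$ in Theorem \ref{t1} is realized by its first argument; the dual inequality $s_2(n+m+1) \geq (n+1)\binom{m+d}{d}$ forces the second identity.

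For an arbitrary $s \leq s_1$, the lower bound $\dim(I_{\mathbb{X}_s})_{d+1} \geq (n+1)\binom{m+d}{d} - s(n+m+1)$ is automatic: it is nothing more than inequality (\ref{disug}) from the proof of Theorem \ref{t1}, specialized to $t=0$. For the matching upper bound, I would view $\mathbb{X}_{s_1}$ as obtained from $\mathbb{X}_s$ by adjoining $s_1 - s$ additional generic double points; each such addition drops the Hilbert function by at most $n+m+1$, so
$$\dim(I_{\mathbb{X}_{s_1}})_{d+1} \geq \dim(I_{\mathbb{X}_s})_{d+1} - (s_1-s)(n+m+1).$$
Substituting the explicit value of the left-hand side from the previous paragraph and rearranging yields $\dim(I_{\mathbb{X}_s})_{d+1} \leq (n+1)\binom{m+d}{d} - s(n+m+1)$, and equality follows.

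The case $s \geq s_2$ is easier still: since (up to relabelling the extra generic points) $\mathbb{X}_{s_2}$ is a subscheme of $\mathbb{X}_s$, one has $(I_{\mathbb{X}_s})_{d+1} \subseteq (I_{\mathbb{X}_{s_2}})_{d+1}$, and the latter is the zero space by the computation above, forcing $\dim(I_{\mathbb{X}_s})_{d+1}=0$.

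I do not foresee any substantive obstacle: all the geometric content already lives in Theorem \ref{t1}, and what remains is a routine squeeze argument based on monotonicity of the Hilbert function in the number of double points. The only bookkeeping point worth flagging is that the monotonicity estimate must be applied in opposite directions in the two cases (extending $\mathbb{X}_s$ up to $\mathbb{X}_{s_1}$ when $s \leq s_1$, and restricting $\mathbb{X}_s$ down to $\mathbb{X}_{s_2}$ when $s \geq s_2$), but both uses are immediate from the fact that a generic double point imposes at most $n+m+1$ conditions.
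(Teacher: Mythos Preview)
Your proposal is correct and follows essentially the same approach as the paper: apply Theorem \ref{t1} with $t=0$ at the multiples $s_1$ and $s_2$, then propagate to all $s\le s_1$ (respectively $s\ge s_2$) via the standard monotonicity principle that each additional generic double point imposes at most $n+m+1$ conditions. The paper states this propagation step tersely (``if for such $s_1$ we have that $(I_{\mathbb X})_{d+1}$ has the expected dimension then it has the expected dimension also for every $s\le s_1$''), whereas you spell out the squeeze argument explicitly; the content is the same.
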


\begin{proof}
By applying Theorem  \ref{t1}, with $t=0$, to the scheme
$
\mathbb{X}=dH_{1}+H_{2}+2P_{1}+ \cdots + 2P_{s}
$, we get that the dimension of
$I(\mathbb{X})_{d+1}$ is the expected one for $s=(n+1)q$ and for
any $q\in \mathbb{N}$. Hence if  $s_1$ is the biggest integer
multiple of $n+1$ such that $\dim(I_{\mathbb{X}})_{d+1}\neq 0$ we
get that for that value of $s$ the Hilbert function
$H(I_{\mathbb{X}},d+1)$ has the expected value. Now if for such
$s_1$ we have that $(I_{\mathbb{X}})_{d+1}$ has the expected
dimension than it has the expected dimension also for every $s\leq
s_1$.

Now, if $s_2$ is the smallest integer multiple of $n+1$ such that $\dim(I_{\mathbb{X}})_{d+1} = 0$  then obviously such a dimension will be zero for all $s\geq s_2$.
\end{proof}

%%%%%%%%%%%%%%%%%%%%%%%%%%%%%%%%%%%%

\begin{theorem}\label{t2}

 Let   $d\geq 3$, $n,m \geq 1$, $N  = (n+1){m+d \choose d}-1$ and  let $s_1, s_2$ be as in Corollary \ref{c1}.

Then the variety $\sigma_{s} \left ( X^{(n,m)}_{(1,d)}\right ) \subset \mathbb P^N$
 has the expected dimension for any
 $s \leq s_1$ and any $s \geq s_2$,
 that is
$$
\dim
\sigma_{s} \left ( X^{(n,m)}_{(1,d)}
\right )=
\left \{
\begin{matrix}
s(n+m+1) -1 &\   for  \ s  \leq s_1   \\
\\
 N & for  \ s  \geq s_2   \\

\end{matrix}
\right. .
$$

\end{theorem}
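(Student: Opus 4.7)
The plan is to derive Theorem \ref{t2} as an essentially formal consequence of Corollary \ref{c1}, using the chain of reductions provided by Corollary \ref{corTer} and Theorem \ref{metodoaffineproiettivo}. In particular, the hard work has already been done in Theorem \ref{t1}; here we only need to translate the statement about fat-point schemes in $\PP{n+m}$ back into a statement about secant varieties in $\PP N$ and check that the two ``expected dimension'' counts match.

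First, I would apply Corollary \ref{corTer} to write
$$\dim \sigma_s\bigl(X^{(n,m)}_{(1,d)}\bigr) = N - \dim (I_Z)_{(1,d)},$$
where $Z \subset \PP n \times \PP m$ is a union of $s$ generic double points. Next, by Theorem \ref{metodoaffineproiettivo}, this quantity equals $N - \dim (I_{\mathbb{X}})_{d+1}$, where
$$\mathbb{X} = dH_1 + H_2 + 2P_1 + \cdots + 2P_s \subset \PP{n+m}$$
is the scheme considered in Corollary \ref{c1}.

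Now I would invoke Corollary \ref{c1}. For $s \leq s_1$ it gives $\dim (I_{\mathbb{X}})_{d+1} = (n+1)\binom{m+d}{d} - s(n+m+1)$, so
$$\dim \sigma_s\bigl(X^{(n,m)}_{(1,d)}\bigr) = N - (n+1)\binom{m+d}{d} + s(n+m+1) = s(n+m+1) - 1,$$
using $N = (n+1)\binom{m+d}{d} - 1$. For $s \geq s_2$ it gives $\dim (I_{\mathbb{X}})_{d+1} = 0$, so $\dim \sigma_s(X^{(n,m)}_{(1,d)}) = N$.

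Finally, I would verify that these values are indeed the ``expected'' dimensions $\min\{N, s(n+m+1)-1\}$ coming from Definition \ref{secant} (recall $\dim X^{(n,m)}_{(1,d)} = n+m$). Since $s_1 \leq \lfloor (n+1)\binom{m+d}{d}/(m+n+1)\rfloor$, for $s \leq s_1$ we have $s(m+n+1) \leq N+1$, hence $s(n+m+1)-1 \leq N$; and since $s_2 \geq \lceil (n+1)\binom{m+d}{d}/(m+n+1)\rceil$, for $s \geq s_2$ we have $s(m+n+1)-1 \geq N$. In both ranges the computed dimension matches the expected one. There is no real obstacle in this argument: all of the geometric content is packaged inside Theorem \ref{t1} (and through it, the Alexander–Hirschowitz theorem used for the base case and the projection step), and Theorem \ref{t2} follows by bookkeeping.
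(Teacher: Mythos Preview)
Your proposal is correct and follows essentially the same route as the paper: reduce the secant dimension computation to $\dim(I_{\mathbb{X}})_{d+1}$ via Theorem~\ref{metodoaffineproiettivo} (Theorem~1.1 of \cite{CGG}) and then invoke Corollary~\ref{c1}. The paper's own proof is a two-line version of exactly this argument; your extra verification that the computed values agree with $\min\{N, s(n+m+1)-1\}$ is a welcome elaboration but adds nothing new methodologically.
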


\begin{proof}
 Let $H_{1}, H_{2}\subset \PP {m+n}$ be projective subspaces of dimensions $n-1$ and $m-1$ respectively and let $P_{1}, \ldots , P_{s}\in \PP {n+m}$ be $s$ generic points of $\PP {n+m}$. Define $\mathbb{X}\subset \PP {m+n}$ to be the scheme $\mathbb{X}:=dH_{1}+H_{2}+2P_{1}+ \cdots + 2P_{s}$. Theorem 1.1 in \cite{CGG} shows that $\dim \sigma_{s} \left ( X^{(n,m)}_{(1,d)}
\right )$ is the expected one if and only if $\dim(I_{\mathbb{X}})_{d+1}$ is the expected one.
 Therefore the conclusion immediately follows from Theorem \ref{metodoaffineproiettivo} and Corollary \ref{c1}.
\end{proof}

\begin{rem}{\em If  ${m+d \choose d}$ is multiple of $(m+n+1)$, say
${m+d \choose d} =  h(m+n+1)$, we get

$$
 \left\lfloor \frac{(n+1){m+d\choose d}}{m+n+1}  \right\rfloor =
  \left\lceil \frac{(n+1){m+d\choose d}}{m+n+1}  \right\rceil =
h(n+1)
$$
so  $s_1 = s_2$. Hence in this case   the variety
$\sigma_{s} \left ( X^{(n,m)}_{(1,d)}\right )$ has the expected dimension for any $s$.

If ${m+d \choose d}$ is not multiple of $(m+n+1)$, it is easy to show that
$s_2- s_1 = n$.  Thus   there are at most $n$ values of $s$ for which
the $s^{th}$ higher secant varieties of  $X^{(n,m)}_{(1,d)}$  can be defective.
}
\end{rem}

\begin{rem}\label{grassref}{\em Theorem \ref{t2} has a straightforward interpretation in terms of Grassmann defectivity. More precisely, we see that the $d$-uple Veronese embedding of $\PP m$ is not $(n,s-1)$-Grassmann defective when $s\leq s_1$ or $s\geq s_2$.
}
\end{rem}

\end{document}